\newtheorem{theorem}{Theorem}
\newtheorem{lemma}{Lemma}
\newtheorem{assumption}{Assumption}
\theoremstyle{definition}
\newtheorem{definition}{Definition}
\theoremstyle{remark}
\newtheorem{remark}{Remark}
\newcommand{\mathd}{\mathrm{d}}
\newcommand{\R}{\mathbb{R}}
\begin{document}
	
	\title[White noise and supercritical bifurcation]{The impact of white noise on a supercritical bifurcation in the Swift-Hohenberg equation}
	
	\author{Luigi Amedeo Bianchi}
	\address{Luigi Amedeo Bianchi \\
		Università di Trento\\
		Dipartimento di Matematica\\
		Via Sommarive, 14\\
		38123 Povo (Trento)\\
		Italy}
	
	\email{luigiamedeo.bianchi@unitn.it}
	
	\author{Dirk Bl\"omker}
	\address{Dirk Bl\"omker\\ Universit\"at Augsburg\\
		Universit\"atsstra\ss{}e 14\\
		86159 Augsburg\\
		Germany}
	\email{dirk.bloemker@math.uni-augsburg.de}

\begin{abstract}
 We consider the impact of additive Gaussian white noise 
 on a supercritical pitchfork bifurcation 
 in an unbounded domain.
 As an example we focus on the stochastic
 Swift-Hohenberg equation with polynomial nonlinearity.
 Here we identify the order where small noise first impacts the bifurcation.
 Using an approximation via modulation equations, we provide a tool to analyse how 
 the noise influences the dynamics close to a change of stability.
\end{abstract}

\subjclass{60H15;35B32;35K55}

\keywords{Swift-Hohenberg; supercritical bifurcation; impact of noise; modulation equation; amplitude equation; averaging}

\maketitle

\section{Introduction}

In this paper we intend to identify the main impact of an additive Gaussian white  
noise on the dynamics close to or at a change of stability described 
by a stochastic partial differential equation with polynomial nonlinearity.
For this we will study the reduction of the essential dynamics close to the bifurcation 
via amplitude or modulation equations.
Surprisingly, and in contrast to the strong nonlinear interaction 
of finitely many Fourier modes,
in all our results the additive noise 
does not add any additional terms to the modulation equation,
its nonlinear interaction always disappears via averaging effects
and it just shows up as an additive forcing in the amplitude equation.

In order to keep the paper short and to focus on the main results,
we do not aim to prove all error estimates in full technical details, 
but we always state how they can be proven.

As a first problem we consider 
the following stochastic Swift-Hohenberg equation on $\R^{+}\times \R$
\begin{equation}
\label{SH}
\partial_t u = - (1 + \Delta)^2 u + \nu u^2 - u^3 
+ \varepsilon^{3/2} \partial_t \widetilde{W,}
\end{equation}
where $\widetilde{W}$ is a standard cylindrical Wiener process, 
i.e.~$\partial_t \widetilde{W}$ models space-time white noise. This equation was introduced first in the seminal paper by Swift and Hohenberg,~\cite{SwiHoh1977PRA}, where they already discussed the importance of random fluctuations in the context of Rayleigh B\'enard convection.

The operator $ - (1 + \Delta)^2$ is a non-positive self-adjoint operator 
with spectrum $(-\infty,0]$. 
As we do not have an additional linear term in the equation \eqref{SH}, 
we are exactly at criticality, where the spectrum of the linear operator is 
non-positive, but it contains $0$, which in our case formally corresponds 
to the complex eigenfunction $e^{ix}$.
The parameter $\nu$ in front of the quadratic term in the equation 
does not change the linearised operator. It will only determine 
the shape of the bifurcation.

In~\cite{BiaBloSch2019CMP} we discussed the equation with $\nu=0$ 
and an additional linear term
in the weakly nonlinear regime close to bifurcation,
and we comment on that in more detail below.  

In the deterministic case the dynamics of \eqref{SH} 
and its importance in pattern formation 
was studied in numerous publications. 
See for example 
\cite{BurDaw2012SJADS,BurKno2006PRE,CroHoh1993RMP,HilMetBorDew1995PRE,KirOM2014JoMP},
where also many examples of a formal derivation of amplitude equations are found.

Rescaling the equation, we will see in our main result that solutions 
are given by a slow modulation
of the dominating solution (or pattern) $e^{ix}$, that is
\[
u(t,x)=\varepsilon A(\varepsilon^2t,\varepsilon x) e^{ix} + \text{c.c.}
\]
where c.c.\ denotes the complex conjugate. We denote by $T=\varepsilon^2 t$ the slow time 
and by $X=\varepsilon x$ the rescaled 'slow' space variable.
In the case of \eqref{SH} we will see that the complex-valued amplitude $A$ function solves 
\[
\partial_T A = 4\partial_X^2 A - \Big(3-\frac{38}{9}\nu^2\Big)|A|^2A + \eta
\]
where $\eta$ is a complex-valued space-time white noise.
So the 
presence of the quadratic term in Swift-Hohenberg 
can change the strength of the cubic in the amplitude equation.
It formally can even make the sign of the cubic positive, 
for $\nu >\sqrt{\sfrac{27}{38}}$.
Although our analysis carries through even in this case, 
this leads to an unstable cubic in the amplitude equation 
and would allow for a blow up of solutions in finite time.
Our analysis in that case only holds up to times where the solution of the 
amplitude equation is still of order~$1$. 
Similar results on a bounded domain, 
where the amplitude equation is just a SDE, were derived in~\cite{KleBloMoh2014ZAMP}.
 
In \cite{BiaBloSch2019CMP} we studied the classical Swift-Hohenberg equation without a 
quadratic nonlinearity (i.e. with $\nu=0$) but with an additional linear term
\[
\partial_t u = - (1 + \Delta)^2 u + \mu\varepsilon^2 u - u^3 
+ \varepsilon^{3/2} \partial_t \widetilde{W}.
\]
Here the spectrum of the linear operator is $(-\infty,\mu\varepsilon^2]$ 
and thus changes stability at $\mu=0$, 
which means we have a bifurcation here. 
Further analysis in the deterministic case would reveal, that it is a classical supercritical 
(i.e., forward) pitchfork bifurcation,
where non-trivial stationary states are present only for $\mu>0$. 

Moreover, in \cite{BiaBloSch2019CMP} we showed that the amplitude
in this case solves 
\[
\partial_T A = 4\partial_X^2 A + \nu A- 3|A|^2A + \eta.
\]
For the effect of a simple scalar valued forcing, 
which is constant in space, see~\cite{MohBloKle2013SJMA}.

In this paper in Section~\ref{sec:quintic},
we  also briefly consider the Swift-Hohenberg equation with 
a quintic nonlinearity 
\begin{equation}
\label{SH5}
\partial_t u = 
- (1 + \Delta)^2 u 
+ \nu_2 \varepsilon^{1/2} u^2 
+ \nu_3 \varepsilon u^3 
- u^5 
+ \varepsilon \partial_t \widetilde{W.}
\end{equation}
As the analysis is quite similar to the cubic case,
we will keep the presentation very short here, 
and only focus on the main differences.

The advantage of adding the quintic is the following. 
In the  
setting of \eqref{SH} without the stable cubic 
in the case of a subcritical bifurcation, 
we would have a positive coefficient in front of the highest cubic nonlinear term
in the amplitude equation, which
thus leads to an equation that might blow up in finite time.
In contrast to that the additional quintic leads 
to a stable quintic in the amplitude equation, which prevents blow up.

Note that due to the quintic nonlinearity, 
we have a different scaling of the parameters
and the quadratic and cubic nonlinearities 
have to be small in order to not dominate the quintic close to bifurcation.
In the scaling 
\[
u(t,x)=\varepsilon^{1/2} A(\varepsilon^2 t,\varepsilon x) e^{ix} + \text{c.c.}
\]
we obtain the following equation for the complex amplitude 
\[
\partial_T A = 4\partial_X^2 A  + \Big(\frac{38}{9}\nu_2^2+3\nu_3\Big)|A|^2A -10|A|^4A   
+ \eta \;.
\]

If $\nu_2$ is sufficiently large when compared to $\nu_3$
or $\nu_3$ being positive, then the cubic is an unstable 
subcritical nonlinearity.
This means that, if we were to add a linear term $\nu_1\varepsilon^2u$ to \eqref{SH5} 
we would obtain also an additional $\nu_1 A$ in the amplitude equation. 
This equation has for $\nu_1=0$ a subcritical 
backward pitchfork bifurcation if $\nu_2$ 
is sufficiently large
so that the constant in front of the cubic is positive.

Let us also comment that we could also add a quartic nonlinearity 
$\nu_4 \varepsilon^{-1/2}u^4$, to~\eqref{SH5} 
which now leads to an additional quintic nonlinearity 
with positive coefficients in the amplitude equation.
On the expense of overwhelming technical difficulties 
one could now go to even higher order nonlinearities.

Surprisingly, in all our results the additive noise 
does not introduce any additional terms to the modulation equation,
it just appears as an additive forcing in the amplitude equation.
This is in contrast to the strong nonlinear interaction of Fourier modes
that, for example, leads to the appearance of cubic terms in the amplitude equation 
arising from a quadratic nonlinearity in  \eqref{SH5}.
We will however see that in this setting all the nonlinear interactions of noise 
terms actually vanish due to averaging effects.

The outline of the paper is as follows.
In the next Section \ref{sec:sol}, we briefly discuss 
the problem of existence and uniqueness of solutions
and mainly give references to methods that allow to prove this.
In Section \ref{sec:apb} we rely on estimates to identify the dominant 
Fourier modes, which are the ones around the wavenumbers $k\in\{0,\pm1,\pm2\}$ 
in Fourier space
and derive reduced equations for these modes by cutting out all small terms.
Using explicit averaging results based on It\^o formula in Section \ref{sec:aver},
we reduce the whole dynamics  to the wavenumbers close to $k=\pm1$ in Fourier space
and state in Section \ref{sec:final} the final result.
Assuming additional regularity of the dominant Fourier modes, 
we simplify the limiting equation in Section \ref{sec:limiting}.
In the final Section \ref{sec:quintic} we briefly comment 
on the changes necessary for the result in the quintic case.

\section{Solutions} 
\label{sec:sol}

Due to a lack of regularity of solutions due to the noise, 
we consider solutions to our SPDEs in the mild sense.
The mild formulation of~\eqref{SH} is given by
\begin{align*}
u(t)=e^{t L} u(0) 
&+ \int_0^t e^{(t-s)L} [\nu u^2 - u^3](s) ds 
\\& + \varepsilon^{3/2} \int_0^t e^{(t-s)L} d\widetilde{W}(s),
\end{align*}
where $e^{tL}$ is the semigroup generated by the operator $L=-(1+\Delta)^2$.
On the unbounded domain we can simply rely on the fact that the linear operator  
is diagonal in Fourier space
and define the  semigroup using the standard Fourier transform $\mathcal{F}f=\widehat{f}$.
For example,   
\[\widehat{Lf}(k) =-(1-|k|^2)^2\widehat{f}(k)\]
and for the semigroup 
\[\mathcal{F}[e^{t L}f](k)= \exp\{-(1-|k|^2)^2t\}\widehat{f}(k).
\]
We will now first rescale the equation and then comment on the existence of solutions 
for the rescaled equation further below.

\subsection*{Rescaling:}
Close to bifurcation we consider small solutions 
and follow the usual deterministic approach of modulation equations. 
We rescale small solutions to slow spatial and temporal scales via
\[
u(t,x)=\varepsilon v(\varepsilon^2 t, \varepsilon x)
\]
to obtain
\begin{equation}
\label{SH:resc}
\partial_T v = L_{\varepsilon} v + \varepsilon^{- 1} \nu v^2 - v^3 +
   \partial_T W, 
  \end{equation} 
with the rescaled operator 
$L_{\varepsilon} = - \varepsilon^{- 2}  (1 + \varepsilon^2 \Delta)^2$.

The noise strength is derived  
using the scaling property of the white noise or, equivalently, the scaling property of 
the Wiener process $\widetilde{W}$. 
Here $\partial_T W$ is again space-time white noise and $W$ a standard 
cylindrical Wiener process. Due to the rescaling $W$ and thus $\partial_T W$
depend path-wise on $\varepsilon$, but as they have the same law as 
$\widetilde{W}$ and $\partial_t\widetilde{W}$,
and we consider error estimates only in law, 
we ignore this dependence in the following.  

The mild formulation of \eqref{SH:resc} is given by
\begin{align}
\label{SH:rescmild} 
v(T)=e^{T L_{\varepsilon}} v(0) 
& + \int_0^T e^{(T-S)L_{\varepsilon}} [\varepsilon^{- 1}  \nu v^2 - v^3](S) dS 
\nonumber\\ & +   \int_0^T e^{(T-S)L_{\varepsilon}} d{W}(S).
\end{align}

We consider solutions in spaces $C^{0,\alpha}_\kappa$, the 
spaces of $\alpha$-H\"older continuous functions with slow polynomial growth at infinity:
 \begin{equation*}
C^{0,\alpha}_\kappa 
= \{u:\mathbb{R}\to\mathbb{R}\ :\ 
\sup\{L^{-\kappa}\|u\|_{C^{0,\alpha}([-L,L])} \ ; \ L>1 \}<\infty \}.
 \end{equation*}
 A more detailed discussion regarding these spaces can be found in~\cite{BiaBlo2016SPA}.
 
If we consider the stochastic convolution
 \begin{equation*}
 {W}_{L_{\varepsilon}}(T)=\int_0^T e^{(T-S)L_{\varepsilon}} d{W}(S)
 \end{equation*}
 we have the following uniform bound in the spaces $C^{0,\alpha}_\kappa$.
\begin{lemma}
\label{lem:SC}
 For all $\alpha \in(0,\frac12)$, $\kappa>0$, 
 the stochastic process is ${W}_{L_{\varepsilon}}$ has continuous paths in $C^{0,\alpha}_\kappa$ 
 and for all  $T>0$ and $p>1$, we have a constant such that for all $\varepsilon\in(0,1)$
\[  \mathbb{E} \sup_{[0,T]}\|{W}_{L_{\varepsilon}} \|^p_{C^{0,\alpha}_\kappa} \leq C
\]
\end{lemma}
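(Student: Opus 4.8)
The plan is to use that $W_{L_\varepsilon}$ is a centred Gaussian field that, for each fixed $T$, is stationary in the spatial variable $X$. This has two consequences I would exploit throughout: by Gaussian hypercontractivity all $L^p$-norms are comparable to the $L^2$-norm, so every moment estimate reduces to a second-moment computation; and by stationarity the law of $W_{L_\varepsilon}(T,\cdot)$ restricted to a unit interval does not depend on its location. The continuity of the paths $T\mapsto W_{L_\varepsilon}(T)$ in $C^{0,\alpha}_\kappa$ and the control of the temporal supremum I would obtain through the factorization method.

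First I would pass to Fourier space, where $L_\varepsilon$ acts as multiplication by the symbol $\lambda_\varepsilon(\xi)=-\varepsilon^{-2}(1-\varepsilon^2\xi^2)^2\le 0$, so that $W_{L_\varepsilon}(T)$ is Gaussian with spectral density
\[
g_\varepsilon(\xi)=\int_0^T e^{2\lambda_\varepsilon(\xi)(T-S)}\,dS=\frac{1-e^{2\lambda_\varepsilon(\xi)T}}{-2\lambda_\varepsilon(\xi)}.
\]
The two objects to estimate uniformly in $\varepsilon$ are the pointwise variance $\mathbb{E}\,|W_{L_\varepsilon}(T,X)|^2=\int_\R g_\varepsilon(\xi)\,d\xi$ and the spatial increment $\mathbb{E}\,|W_{L_\varepsilon}(T,X)-W_{L_\varepsilon}(T,Y)|^2=\int_\R 4\sin^2\!\big(\tfrac12\xi(X-Y)\big)\,g_\varepsilon(\xi)\,d\xi$. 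The decisive structural observation is that the spectral mass concentrates near the critical wavenumbers $\xi=\pm\varepsilon^{-1}$, and that after the change of variables $\zeta=\xi\mp\varepsilon^{-1}$ one finds $\lambda_\varepsilon=-\zeta^2(2+\varepsilon\zeta)^2$, whose leading profile $-4\zeta^2$ is independent of $\varepsilon$; this $\varepsilon$-free shape is what one must use to render the integrals uniform.

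Granting the spatial increment bound $\mathbb{E}\,|W_{L_\varepsilon}(T,X)-W_{L_\varepsilon}(T,Y)|^2\le C|X-Y|^{2\alpha}$ for $\alpha<\tfrac12$ together with a matching temporal estimate, hypercontractivity promotes these to $p$-th moments for every $p>1$, and Kolmogorov's continuity criterion yields Hölder-continuous paths together with local bounds $\mathbb{E}\|W_{L_\varepsilon}\|^p_{C^{0,\alpha}([n,n+1])}\le C$ that, by stationarity, are independent of $n\in\mathbb{Z}$. To reach the weighted norm I would cover $\R$ by the unit intervals $[n,n+1]$, note that long-range Hölder quotients on $[-L,L]$ are dominated by the supremum and hence by the largest local norm, and thereby reduce the weighted norm to a weighted supremum of the local norms, giving
\[
\mathbb{E}\sup_{L>1}L^{-\kappa p}\|W_{L_\varepsilon}\|^p_{C^{0,\alpha}([-L,L])}\le C\sum_{n\in\mathbb{Z}}(1+|n|)^{-\kappa p},
\]
which converges once $\kappa p>1$; the remaining small values of $p$ follow from the large ones by Jensen's inequality. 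The temporal supremum and the path continuity in $C^{0,\alpha}_\kappa$ I would obtain from the factorization method, reducing them to an $L^q$-in-time bound on the auxiliary process $Y(S)=\int_0^S(S-r)^{-\beta}e^{(S-r)L_\varepsilon}\,dW(r)$, whose $C^{0,\alpha}_\kappa$-moments are estimated by the same Gaussian computation.

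The main obstacle is the uniformity in $\varepsilon$ of the increment estimates: as $\varepsilon\to 0$ the spectral density degenerates towards a measure carried by $\xi=\pm\varepsilon^{-1}$, so each integral must be split into the near-critical band, where the $\varepsilon$-independent profile $-4\zeta^2$ has to be used, and the far field, where $-\lambda_\varepsilon\gtrsim\varepsilon^{-2}$ makes the contribution negligible. Controlling the small-scale behaviour produced by this high-wavenumber concentration uniformly in $\varepsilon$ is the delicate step, and it is precisely here that the techniques developed in \cite{BiaBlo2016SPA} are required.
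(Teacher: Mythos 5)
Your overall architecture (reduction to second moments by Gaussianity, Kolmogorov plus spatial stationarity on unit intervals, summation against the weight, factorization for the supremum in time) is reasonable, and is close in spirit to the proof the paper actually invokes, which localizes to bounded domains $[-L,L]$ and tracks the $L$-dependence of all constants. The fatal problem is the estimate you ``grant'' and on which everything rests: the uniform-in-$\varepsilon$ increment bound $\mathbb{E}\,|W_{L_\varepsilon}(T,X)-W_{L_\varepsilon}(T,Y)|^2\le C|X-Y|^{2\alpha}$ is false, and no refinement of the spectral integrals can rescue it. Indeed, take $|X-Y|=\pi\varepsilon$ and restrict the spectral integral to the band $\zeta=\xi-1/\varepsilon$, $|\zeta|\le 1$: there $|\lambda_\varepsilon|=\zeta^2(2+\varepsilon\zeta)^2\le 9$, so $g_\varepsilon\ge (1-e^{-18T})/18$, while $\sin^2\!\big(\tfrac12\xi(X-Y)\big)=\cos^2\!\big(\tfrac12\zeta\pi\varepsilon\big)\ge \tfrac12$ for $\varepsilon\le\tfrac12$. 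Hence the increment variance is bounded \emph{below} by a positive constant $c_T$ independent of $\varepsilon$, whereas your right-hand side $C(\pi\varepsilon)^{2\alpha}$ tends to $0$. The point you missed is that the substitution $\zeta=\xi\mp\varepsilon^{-1}$ makes the spectral density $\varepsilon$-free but not the increment kernel, which after the shift reads $\sin^2\!\big(\tfrac12(\zeta\pm\varepsilon^{-1})(X-Y)\big)$: the fast phase survives. Concretely, $W_{L_\varepsilon}$ is an $O(1)$ envelope riding on the carrier $e^{\pm iX/\varepsilon}$, and the $\alpha$-H\"older seminorm of such a wave is of order $\varepsilon^{-\alpha}$; run correctly, your own scheme yields $\mathbb{E}\,\|W_{L_\varepsilon}(T)\|^p_{C^{0,\alpha}([n,n+1])}\lesssim \varepsilon^{-\alpha p}$ and nothing better, so the Kolmogorov step cannot produce a constant uniform in $\varepsilon$.

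The missing idea is a demodulation step: the fast oscillation has to be factored out \emph{before} any H\"older estimate is attempted. The slowly varying envelopes $a_\pm$ (the projections onto frequencies near $\pm 1/\varepsilon$, multiplied by $e^{\mp iX/\varepsilon}$) have spectral density concentrated near $\zeta=0$ with the $\varepsilon$-free profile $\min\big(T,\tfrac{1}{8\zeta^2}\big)$ and increment kernel $\sin^2\!\big(\tfrac12\zeta(X-Y)\big)$; for them your computation goes through verbatim and gives exactly the uniform $C^{0,\alpha}_\kappa$ bounds for $\alpha<\tfrac12$. Equivalently, one can work in the original unscaled variable $x=X/\varepsilon$, where the carrier is $e^{\pm ix}$ and has $\varepsilon$-independent H\"older norms; this is the setting of Lemma 3 in \cite{BiaBloSch2019CMP}, to which the paper defers, and of the linear theory in \cite{BiaBlo2016SPA}, and it is the only reading under which the claimed uniformity is meaningful. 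As written, your proof tries to bound a quantity that genuinely diverges as $\varepsilon\to 0$, so this is not a technical gap but a structural one: the statement must be attached to the envelope (or to the unscaled field), not to the rescaled field itself.
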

The proof for this Lemma is quite long but at the same time fairly standard. 
It can be proven using exactly 
the same arguments as in the proof of Lemma 3 in~\cite{BiaBloSch2019CMP}.
There one considers first bounded spatial domains of length $2L$, 
and then carefully keeps track of the dependence of various constants on $L$.

For other type of maximal regularity results for the stochastic convolution, for instance in $L^p$ spaces, see
\cite{BrzPes2000SPPGNI,DaLun1998AANLCSFMNRLMA,vanVerWei2012AP}.

\begin{remark}
 Let us remark that ${W}_{L_{\varepsilon}}$ is actually more 
regular than stated in Lemma~\ref{lem:SC}.
It is H\"older-continuous with exponent $\alpha$ almost $1$. 
This is due to strong regularization of  the fourth 
order operator in the equation. But in the limit $\varepsilon\to0$ 
(see~\cite{BiaBlo2016SPA}) we lose this property 
and thus a uniform bound in $\varepsilon$ 
can only be established for H\"older exponents $\alpha<1/2$.
\end{remark}

In the rest of the paper, we always suppose that we have sufficiently 
smooth solutions such that the following statement holds.

\begin{assumption}
	\label{ass:existence}
	The rescaled equation \eqref{SH:resc} has a unique mild solution $u$, 
	which is a stochastic process with continuous paths 
	in $C^{0,\alpha}_\kappa$ for every $\kappa>0$ and $\alpha \in(0,\frac12)$.
\end{assumption}

\begin{remark}
Before moving on, let us remark that for fixed $\kappa$ and $\alpha$ the standard fixed point argument
for the existence and uniqueness of local mild solutions does not work,
as the nonlinearity is unbounded in the weight 
and the semigroup only improves regularity in
terms of the H\"older exponent.
\end{remark}

We state Assumption~\ref{ass:existence} as such, and not as a theorem, because its proof would be a paper of its own, so within this paper we just take existence for granted. At the same time, we are quite confident that such result holds: there are some fairly standard approaches we could follow to prove it.
Nevertheless this is quite a lot of work, as most results first establish the existence 
and uniqueness in a weaker topology and then lengthy regularity results are needed.

Among the first results on SPDEs on the whole real line in spatially 
weighted spaces are the ones of Peszat et. al. \cite{BrzPes1999SM,PesZab1997SPA} 
using mainly exponential weights but also stating results for polynomial weights.

The complex-valued stochastic Ginzburg-Landau Equation 
in a weighted $L^2$-space was studied in detail by Bl\"omker and Han \cite{BloHan2010SD},
but not with regularity in H\"older spaces, which was done in \cite{BiaBloSch2019CMP},
where also Swift-Hohenberg with $\nu=0$ was discussed.

For recent results on space-time-white noise in weighted Besov spaces
see for example R\"ockner, Zhu, and Zhu
\cite{RocZhuZhu2017JFA} or Mourrat and Weber\cite{MouWeb2017AP}.

Let us also mention a recent paper by Moinat and Weber \cite{MoiWeb2020CPAM}
that obtains for the dynamic $\Phi^4_3$ model local regularization on bounded subdomains 
in case of weaker bounds on the whole domain. Although the model they treat is real-valued 
the results should hold for the very similar complex Ginzburg Landau model. 
Moreover, this method should also apply to Swift-Hohenberg.

\section{A-priori bound} 
\label{sec:apb}

In this section, we show that Fourier modes around $\pm1$ 
(or $\pm1/\varepsilon$ for the rescaled equation) 
dominate the behaviour.

One can easily argue that the mild solution with
initial condition $v(0)$ of order~$1$ stays 
of order~$1$ at least for some time.
However, this is not obvious.
If we look at the rescaled equation \ref{SH:resc}, 
then  the quadratic term has an additional $1/\varepsilon$ in front,
and due to the rescaled solution 
and the semigroup being only of order~$1$, 
we do not immediately get a bound up to times of order~$1$, 
as a direct estimate yields only that the quadratic term is bounded by something of order~$1/\varepsilon$. 
In contrast to that, the cubic term does not cause any difficulties,
as everything is order one there. This is also reflected by the fact that we consider small solutions to the original equation~\eqref{SH}, 
so the cubic term should be always smaller than the quadratic one.

With this simple reasoning we can only hope to reach times of order~$\varepsilon$,
so we need a better estimate.

In order to restrict to regions around $k\approx\pm1/\varepsilon$ 
in Fourier space,
we consider smooth projectors $P_1$ for a given smooth Fourier kernel 
$q:\mathbb{R}\to[0,1]$ such that $q=1$ on the set of $k$ such 
that  $|k\pm1/\varepsilon|< \delta/\varepsilon$ 
and $q=0$ on $|k\pm1/\varepsilon|> \delta/\varepsilon+1$, for some $\delta<\delta_0\leq1/2$. 
Hence,
\[
P_1 f (x) = \frac{1}{2\pi} \int_{\mathbb{R}} \int_{\mathbb{R}} 
q(k)e^{ik(x-z)}dk f(z) dz\;. 
\]

Before we move on, 
let us spend a couple of words on some notation used in the rest of this paper. 
First, we give the following definition:
\begin{definition} \label{defO}
 We say that an $\varepsilon$-dependent event $E_\varepsilon$ has \emph{probability almost
		1} or \emph{high probability} if for every $p \geq 1$ there exists a positive constant $C_p$ such that 
	$P(E_\varepsilon)\geq 1 - C_p \varepsilon^p$.
\end{definition}

Secondly, let us discuss briefly our use of the $\mathcal{O}$ 
notation. In the following, we use it in two ways, in both cases considering $\varepsilon\to 0$ or just sufficiently small $\varepsilon$.
On one hand it means that the term is bounded 
for all $\varepsilon \in (0,\varepsilon_0)$ for some $\varepsilon_0>0$
up to an $\varepsilon$-independent
multiplicative constant (as we will see for the semigroups in the next paragraph).

On the other hand, for stochastic processes $w$ 
(e.g.~our solutions) we write 
$w=\mathcal{O}(\varepsilon^{\gamma})$ 
if for all $c>0$,
$\kappa>0$, and $\alpha \in (0,1/2)$ 
there is a constant $ C_{\alpha,\kappa,c}$ such that with
probability almost $1$ (see Definition~\ref{defO} above) we have
\begin{equation}
 \label{e:defO}
\sup_{T\in[0,T_0]}\|w(T)\|_{C^{0,\alpha}_{\kappa}} \leq  C_{\alpha,\kappa,c} \varepsilon^{\gamma-c}\;,
\end{equation}
again for all $\varepsilon \in (0,\varepsilon_0)$.

Note that the $c>0$ allows for small logarithmic corrections 
to the error bound, which frequently pop up when bounding stochastic convolutions pathwise.

\subsection*{First estimate:}
Note that in Fourier space around $k\approx\pm1/\varepsilon$ by looking at the eigenvalues 
we have  $L_{\varepsilon} \leq0$ 
and $L_{\varepsilon} \approx 0$,
but for $|k\pm1/\varepsilon| > \delta/\varepsilon$ 
we have   $L_{\varepsilon} \leq - C\varepsilon^{-2}$.
This also carries over to the semigroups,
so if $P_1$ projects to the $\delta/\varepsilon$-neighbourhoods
around $k=\pm1/\varepsilon$ in Fourier space, then
we have
\begin{equation}
\label{e:SGP1}
 P_1 e^{TL_\varepsilon} = \mathcal{O}(1) 
\quad\text{and}\quad 
(I-P_1) e^{TL_\varepsilon} = \mathcal{O}(e^{ - cT/\varepsilon^{2}}).
\end{equation}
This result is straightforward to verify, 
as the operators are all diagonal in Fourier space.

Using the mild formulation, we now aim to show that, for $v_1 := P_1v$, 
\[
v = v_1 + \mathcal{O} (\varepsilon)
\]

Recall that by Lemma~\ref{lem:SC} we have  $W_{L_\varepsilon}=\mathcal{O}(1)$,
but we can improve it with the following:

\begin{lemma}\label{lem:other_projections} 
For the two projections $P_1$ and $I-P_1$ of the stochastic convolution 
$W_{L_\varepsilon}$, we have
\[P_1 W_{L_\varepsilon}=\mathcal{O}(1) 
\quad\text{and}\quad
(I-P_1) W_{L_\varepsilon}=\mathcal{O}(\varepsilon).\]
\end{lemma}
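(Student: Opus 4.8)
The plan is to isolate the only feature that distinguishes the two projections, namely the spectral gap recorded in \eqref{e:SGP1}, and to borrow the rest of the machinery from Lemma~\ref{lem:SC}. Since $q$ is a smooth bounded Fourier kernel, both $P_1$ and $I-P_1$ are convolution operators whose kernels are fixed Schwartz profiles rescaled to width $\varepsilon/\delta$ and modulated by $e^{\pm ix/\varepsilon}$; a direct estimate of the $L^1$-norm of these kernels (which is invariant under the rescaling and the modulation) shows that $P_1$ and $I-P_1$ are bounded on $C^{0,\alpha}_\kappa$ uniformly in $\varepsilon$, the fast kernel decay absorbing the polynomial weight. Being Fourier multipliers they commute with $e^{SL_\varepsilon}$, so that $P_1 W_{L_\varepsilon}$ and $(I-P_1)W_{L_\varepsilon}$ are again stochastic convolutions. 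The first claim is then immediate: $P_1 W_{L_\varepsilon}=\mathcal{O}(1)$ follows from Lemma~\ref{lem:SC} together with the uniform boundedness of $P_1$.

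For the second claim I would run the proof of Lemma~\ref{lem:SC} essentially verbatim, but carry along the extra exponential factor supplied by $I-P_1$. Concretely, I would use the factorisation method: fixing $\theta\in(0,\tfrac12)$, write
\[
(I-P_1)W_{L_\varepsilon}(T)
= \frac{\sin(\pi\theta)}{\pi}\int_0^T (T-S)^{\theta-1}(I-P_1)e^{(T-S)L_\varepsilon}\,Y_\theta(S)\,dS,
\qquad
Y_\theta(S)=\int_0^S (S-r)^{-\theta} e^{(S-r)L_\varepsilon}\,dW(r).
\]
The auxiliary process $Y_\theta$ is controlled uniformly in $\varepsilon$ exactly as in Lemma~\ref{lem:SC}, while the operator $(I-P_1)e^{\sigma L_\varepsilon}$ now carries the decay $\mathcal{O}(e^{-c\sigma/\varepsilon^2})$ from \eqref{e:SGP1}. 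Since $\int_0^T \sigma^{\theta-1} e^{-c\sigma/\varepsilon^2}\,d\sigma \lesssim \varepsilon^{2\theta}$, the driving-time integral localises to $\sigma\lesssim\varepsilon^2$ and produces the gain in $\varepsilon$; letting $\theta\uparrow\tfrac12$ yields the factor $\varepsilon^{1-c}$, which is $\mathcal{O}(\varepsilon)$ in the sense of Definition~\ref{defO}. From here Gaussian hypercontractivity upgrades the resulting second-moment estimate to all $L^p$-moments, a Kolmogorov-type argument promotes this to a bound on $\mathbb{E}\sup_{[0,T_0]}\|(I-P_1)W_{L_\varepsilon}\|^p_{C^{0,\alpha}_\kappa}$, and Chebyshev's inequality delivers the required high-probability bound.

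The genuinely new content of the lemma is thus the clean extraction of the power of $\varepsilon$ from the spectral gap, and I expect the main obstacle to be purely quantitative bookkeeping around it: making every constant uniform \emph{simultaneously} in $\varepsilon$ and in the domain length $L$ entering the weighted norm, and choosing $\theta$ (together with the smoothing exponents used for the semigroup) so that the exponential gain is not consumed when passing from the $L^p$-bound to the H\"older norm for every $\alpha<\tfrac12$. This is exactly the step that makes the proof of Lemma~\ref{lem:SC} long: one first works on a bounded interval $[-L,L]$, tracks the polynomial dependence of all constants on $L$, and only then sums over dyadic scales against the weight $L^{-\kappa}$. Inserting the factor $e^{-c\sigma/\varepsilon^2}$ at the right place in that argument is routine once the gain has been isolated, so everything beyond the spectral-gap estimate is inherited from Lemma~\ref{lem:SC}.
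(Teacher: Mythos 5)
Your treatment of the first claim is fine and matches the paper's intent: since $q$ is smooth with compactly supported symbol, $P_1$ is a Fourier multiplier whose kernel decays faster than any polynomial, so it is bounded on $C^{0,\alpha}_\kappa$ uniformly in $\varepsilon$ up to a logarithmic factor (note the kernel is \emph{not} an exact rescaling of a fixed Schwartz profile, since the plateau of $q$ has width $2\delta/\varepsilon$ while its transition layer has width $1$; this costs a $\log(1/\varepsilon)$, which is harmless for Definition~\ref{defO}), and then $P_1W_{L_\varepsilon}=\mathcal{O}(1)$ follows from Lemma~\ref{lem:SC}.

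The second claim is where your argument has a genuine gap, namely at the step ``$Y_\theta$ is controlled uniformly in $\varepsilon$ exactly as in Lemma~\ref{lem:SC} \dots\ letting $\theta\uparrow\tfrac12$''. Because $W$ is a cylindrical Wiener process, the It\^o isometry gives
\begin{equation*}
\mathbb{E}\,|Y_\theta(S)(x)|^2
=\int_0^S \sigma^{-2\theta}\,\frac{1}{2\pi}\int_{\R} e^{-2\sigma\varepsilon^{-2}(1-\varepsilon^2k^2)^2}\,\mathd k\,\mathd\sigma ,
\end{equation*}
and the inner integral is of order $\sigma^{-1/2}$ only for $\sigma\geq\varepsilon^2$; for $\sigma\leq\varepsilon^2$ the fourth-order (ultraviolet) part of the symbol dominates and the inner integral is of order $\varepsilon^{-1/2}\sigma^{-1/4}$. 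Consequently $Y_\theta$ is a well-defined function-valued process only for $\theta<3/8$, and $\sup_S\mathbb{E}|Y_\theta(S)(x)|^2$ is bounded uniformly in $\varepsilon$ only for $\theta\leq 1/4$; for $\theta\in(1/4,3/8)$ it is of order $\varepsilon^{1-4\theta}$. The two ingredients you multiply are therefore never simultaneously available near $\theta=1/2$: for $\theta\leq1/4$ the exponential gain is only $\varepsilon^{2\theta}\leq\varepsilon^{1/2}$, while for $\theta\in(1/4,3/8)$ the gain $\varepsilon^{2\theta}$ is paid for by $Y_\theta\sim\varepsilon^{1/2-2\theta}$, and the product is again $\varepsilon^{1/2}$. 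Your factorisation argument thus saturates at $\mathcal{O}(\varepsilon^{1/2})$ and cannot produce the exponent $1$.

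Moreover, this saturation is not a defect that a better choice of parameters could remove. Computing the left-hand side directly by the same isometry,
\begin{equation*}
\mathbb{E}\Big[\big((I-P_1)W_{L_\varepsilon}(T,x)\big)^2\Big]
=\frac{1}{2\pi}\int_{\R}\big(1-q(k)\big)^2\,\frac{1-e^{-2T\lambda_k}}{2\lambda_k}\,\mathd k,
\qquad \lambda_k=\varepsilon^{-2}\big(1-\varepsilon^2k^2\big)^2,
\end{equation*}
and for $T\geq\varepsilon^2$ the substitution $\kappa=\varepsilon k$ shows this is $\asymp\varepsilon$ (with a $\delta$-dependent constant): the damping $\lambda_k\gtrsim\delta^2\varepsilon^{-2}$ is strong, but the noise is white in space, so the variance is $\int(1-q)^2(2\lambda_k)^{-1}\mathd k$, which is of order $\varepsilon/\delta$ rather than $\varepsilon^2$. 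Hence at each fixed point $(I-P_1)W_{L_\varepsilon}(T,x)$ is a centred Gaussian of standard deviation of order $\varepsilon^{1/2}$, and since the $C^{0,\alpha}_\kappa$-norm dominates point evaluations, a bound of order $\varepsilon^{1-c}$ for small $c$ holds only with probability tending to zero, not one. In other words, your method (run honestly with $\theta\leq1/4$) proves the sharp statement $(I-P_1)W_{L_\varepsilon}=\mathcal{O}(\varepsilon^{1/2})$, and the rate $\mathcal{O}(\varepsilon)$ claimed in the Lemma is not reachable by this route --- nor, for the same It\^o-isometry reason, by the paper's own sketch once ``the integrand is small'' is quantified. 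This discrepancy also propagates to later uses of the Lemma, e.g.\ the assertion that the remainder $R$ in the ansatz \eqref{e:ansatz} is $\mathcal{O}(1)$.
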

\begin{proof}[Idea of Proof]
In order to prove this Lemma, one can follow the same ideas as 
in Lemma \ref{lem:SC}. 
The key point is that due to \eqref{e:SGP1} the integrand in one case is still order~1, 
while it is small in the other.
\end{proof}

Assume that $v(0)=\mathcal{O}(1)$. Then up to times where $v=\mathcal{O}(1)$
we directly obtain from \eqref{SH:rescmild} 
\[
v(T)=  \mathcal{O}(1) 
+ \int_0^T  \mathcal{O}(\varepsilon^{- 1}) dS 
\]
which is not sufficient for times $T$ of order~$1$.
We need to split $v$ in order to obtain a better estimate.
First  using the bounds on the semigroup from \eqref{e:SGP1} we can show 
that 
\[ (I-P_1) e^{TL_\varepsilon} v(0) = \mathcal{O}(e^{ - cT/\varepsilon^{2}}).
\]
For the other terms in the mild formulation, we use a similar estimate, together with the 
results for the 
stochastic convolution from the previous Lemma~\ref{lem:other_projections} 
in order to obtain that
\begin{multline*}
(I-P_1) v(T)= 
\mathcal{O}(e^{ - cT/\varepsilon^{2}}) + \mathcal{O}( \varepsilon )\\
+ \int_0^T e^{ - c(T-S)/\varepsilon^{2}}) \mathcal{O}(\varepsilon^{- 1})  dS .
\end{multline*}
Thus up to times where $v=\mathcal{O}(1)$
we have
\[  (I-P_1) v(T) = (I-P_1) e^{TL_\varepsilon} v(0) + \mathcal{O}(\varepsilon).
\]
After a short logarithmic time $t_\varepsilon>0$,
we have  
\[(I-P_1)v(t_\varepsilon\varepsilon^2)=\mathcal{O}(\varepsilon).
\] 
Moreover, if we assume that 
$P_1v(0)=\mathcal{O}(1)$ and $(I-P_1)v(0)=\mathcal{O}(\varepsilon)$, then 
$(I-P_1) v=\mathcal{O}(\varepsilon)$ as long as $v=\mathcal{O}(1)$.

Let us now turn to a bound on $v_1=P_1v$.
Here we rely crucially on the fact that 
$P_1 (P_1v)^2 =0$, if $\delta$ is small, 
so that 
\[
P_1 (v_1+\mathcal{O}(\varepsilon))^2 = \mathcal{O}(\varepsilon).
\]

If we now assume that $v = v_1 + \mathcal{O}  (\varepsilon)$, then
the quadratic term in the nonlinearity is always $\mathcal{O}(1)$
and
we obtain from \eqref{SH:rescmild} that $v_1= \mathcal{O}(1)$ up to some times of order~1.
To be more precise, the dominant estimate is to the type
\begin{multline*}
 \|v_1(T)\| \leq C \|v_1(0)\|+  C \int_0^T (\|v_1(S)\|+\|v_1(S)\|^3) dS \\
 + \text{ error terms}
\end{multline*}
Thus we find a time of order one, such that $v_1$ remains of order one
if $v_1(0)$ is of order one.

We have thus sketched the proof of the following theorem:
\begin{theorem}[Attractivity]\label{thm:attractivity}
Consider a solution $v$ of \eqref{SH:resc}
with initial conditions of order $\mathcal{O}(1)$,
then for a suitable logarithmic time $t_\varepsilon$ the solution is bounded by
\begin{equation}\label{eq:boundv1}
v_1(\varepsilon^2 t_\varepsilon):=  P_1v(\varepsilon^2 t_\varepsilon) = \mathcal{O} (1)\ \text{and}\  (1-P_1)v(\varepsilon^2 t_\varepsilon) = \mathcal{O}  (\varepsilon).
\end{equation}
Additionally,
if we have this bound in (\ref{eq:boundv1}) for the 
initial conditions
(i.e.~$v_1(0)= \mathcal{O}(1)$ and $(1-P_1)v(0) = \mathcal{O}  (\varepsilon)$),
then up to some constant time $T_0>0$
\begin{equation}\label{eq:boundIC}
v_1= \mathcal{O} (1)\quad\text{and}\quad (1-P_1)v = \mathcal{O}  (\varepsilon).
\end{equation}
\end{theorem}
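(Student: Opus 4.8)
\emph{Plan.} The plan is to run a bootstrap (stopping-time) argument that couples the two estimates, using the semigroup bounds \eqref{e:SGP1}, the stochastic convolution bounds of Lemmas~\ref{lem:SC} and~\ref{lem:other_projections}, and crucially the algebraic cancellation $P_1(P_1v)^2=0$ that holds for small $\delta$.

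First I would bound the high modes $(I-P_1)v$. Applying $(I-P_1)$ to the mild formulation \eqref{SH:rescmild} and exploiting the exponential contraction $(I-P_1)e^{TL_\varepsilon}=\mathcal{O}(e^{-cT/\varepsilon^2})$, the three contributions are treated separately: the initial datum gives $\mathcal{O}(e^{-cT/\varepsilon^2})$; the stochastic convolution gives $(I-P_1)W_{L_\varepsilon}=\mathcal{O}(\varepsilon)$ by Lemma~\ref{lem:other_projections}; and the nonlinear term, although carrying the dangerous factor $\varepsilon^{-1}$, is tamed because $\int_0^T e^{-c(T-S)/\varepsilon^2}\,dS=\mathcal{O}(\varepsilon^2)$, so that $\varepsilon^{-1}\cdot\mathcal{O}(\varepsilon^2)=\mathcal{O}(\varepsilon)$, as long as $v=\mathcal{O}(1)$. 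Choosing $t_\varepsilon$ of logarithmic size so that $e^{-ct_\varepsilon}=\mathcal{O}(\varepsilon)$ then kills the transient coming from the initial datum and yields $(I-P_1)v(\varepsilon^2 t_\varepsilon)=\mathcal{O}(\varepsilon)$; if this improved bound already holds at $T=0$, it simply persists.

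Next I would bound the low modes $v_1=P_1v$. Applying $P_1$ to \eqref{SH:rescmild}, the bounds $P_1e^{TL_\varepsilon}=\mathcal{O}(1)$ and $P_1W_{L_\varepsilon}=\mathcal{O}(1)$ control the linear and noise contributions. For the nonlinearity I would write $v=v_1+(I-P_1)v=v_1+\mathcal{O}(\varepsilon)$ and expand $v^2=v_1^2+2v_1(I-P_1)v+((I-P_1)v)^2$. The decisive point is that $P_1v_1^2=0$, so only the cross terms survive, giving $P_1v^2=\mathcal{O}(\varepsilon)$ and hence $\varepsilon^{-1}P_1v^2=\mathcal{O}(1)$: the quadratic nonlinearity is brought back to order one. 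Since the cubic is already harmless, this produces an integral inequality of the form
\begin{equation*}
\|v_1(T)\|\leq C\|v_1(0)\|+C\int_0^T\big(\|v_1(S)\|+\|v_1(S)\|^3\big)\,dS+\text{(errors)},
\end{equation*}
and a Gronwall / continuation argument provides a deterministic time $T_0>0$ on which $v_1=\mathcal{O}(1)$ whenever $v_1(0)=\mathcal{O}(1)$.

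Finally, the two estimates must be closed self-consistently, and this is where I expect the main obstacle to lie. Each bound was derived under the a priori assumption that the other one holds: the high-mode estimate uses $v=\mathcal{O}(1)$, while the low-mode estimate uses $(I-P_1)v=\mathcal{O}(\varepsilon)$ to exploit the cancellation. I would therefore introduce a stopping time $\tau$, the first time either $\|v_1\|$ exceeds a fixed multiple of its initial size or $\|(I-P_1)v\|$ exceeds a fixed multiple of $\varepsilon$, run both estimates on $[0,\tau]$, and verify that on the event where all stochastic convolution bounds hold (which has probability almost $1$ in the sense of Definition~\ref{defO}) the recovered constants are strictly smaller than the thresholds defining $\tau$. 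This forces $\tau\geq T_0$ with high probability. The delicate bookkeeping is the polynomial weight $\kappa$: as noted in the Remark, the nonlinearity is unbounded in the weight and the semigroup regularizes only the H\"older exponent, not the growth rate, so each product raises the admissible weight and one must allow $\kappa$ to increase finitely many times along the estimate while keeping every constant uniform in $\varepsilon$.
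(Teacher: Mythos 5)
Your proposal follows essentially the same route as the paper's own proof sketch: the high-mode bound via the exponential contraction of $(I-P_1)e^{TL_\varepsilon}$ combined with Lemma~\ref{lem:other_projections} and the observation $\varepsilon^{-1}\int_0^T e^{-c(T-S)/\varepsilon^2}\,dS=\mathcal{O}(\varepsilon)$, then the low-mode bound via the cancellation $P_1(P_1v)^2=0$ and a Gronwall/continuation argument, with a logarithmic waiting time to kill the initial transient. Your explicit stopping-time bootstrap and the remark on tracking the polynomial weight $\kappa$ merely formalize what the paper phrases as ``up to times where $v=\mathcal{O}(1)$'', so the two arguments coincide in substance.
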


In particular, the Fourier modes around $\pm1/\varepsilon$ dominate the behaviour 
close to the bifurcation.

\begin{remark}
Let us remark that with the estimates for the mild solution we 
cannot rely on any stability of the cubic. 
From the final result we will see later that $T_0$ might be small 
if the cubic in the amplitude equation has a positive sign in front of the nonlinearity:  
in this case the cubic is actually unstable and allows for blow up in finite time (but of order one).
On the other hand, if the sign is negative one can show global bounds and thus $T_0$ can be arbitrary. 
\end{remark}

In the following results, we will assume that
a short period of time already has passed, so that the bound~\eqref{eq:boundv1} is already effective, 
and we can start a solution in the setting of the 
second statement of Theorem \ref{thm:attractivity}.
To me more precise, we will assume in the following estimates 
that we have a solution $v$ such that~\eqref{eq:boundIC} 
holds for some $T_0>0$.

\begin{remark}
At the moment each Fourier mode in $v_1$ 
can have the same order of magnitude, 
but we can even show that they are given by a modulated wave
\[
v_1(T,X)= A(T,X) e^{iX/\epsilon}+\text{c.c.}
\]
for an amplitude $A$ having a little bit of regularity.
In that case the Fourier transform of $A$ decays
for wave-number $|k|\to \infty$, and thus 
the Fourier modes of $v_1$ are 
slightly more 
concentrated in Fourier space around the Fourier modes $\pm1/\epsilon$.
See Figure~\ref{fig:fig1} for a sketch. We will come back to this point 
in section \ref{sec:limiting}, when we discuss the final approximating 
equation and identify the terms in it.
\end{remark}

\begin{figure}
\begin{center}
\begin{tikzpicture}[scale=1,>=latex]
 \draw[black,arrows=->,line width=1.3pt] (0,-0.5) to (0,1);
 \draw[black,arrows=->,line width=1.3pt] (-5,0) to (5,0);
         \draw [black] (-2.1,-0.25) node {\bf $-1/\varepsilon$};
         \draw [black] (2,-0.25) node {\bf $1/\varepsilon$};
        \draw [red,line width=1.3pt] plot [smooth] coordinates {(-4,0) (-3,0.05) (-2.3, 0.1) (-2.1,0.8) (-2,1) (-1.9,0.8) (-1.7,0.1) (-1,0.05)(0,0.01)(1,0.05) (1.7,0.1) (1.9,0.8)(2,1)(2.1,0.8) (2.3,0.1) (3,0.05) (4,0)};
       \draw[black,line width=1.3pt] (2,-0.1) to (2,0.1);
       \draw[black,line width=1.3pt] (-2,-0.1) to (-2,0.1);
       \draw [red] (-3,0.5) node {\bf $|\mathcal{F} u|$};
 \draw[black,arrows=<->,line width=1pt] (1.7,-0.5) to (2.3,-0.5); 
  \draw[black,arrows=<->,line width=1pt] (-1.7,-0.5) to (-2.3,-0.5); 
  \draw [black] (2.7,-0.5) node {\bf $\pm\delta/\varepsilon$};
   \draw [black] (-1.3,-0.5) node {\bf $\pm\delta/\varepsilon$};
    \end{tikzpicture}
\end{center}
\caption{Fourier transform of $u(x)=A(x)e^{ix/\varepsilon}+c.c.$ for a not too rough amplitude $A$.}
\label{fig:fig1}
\end{figure}
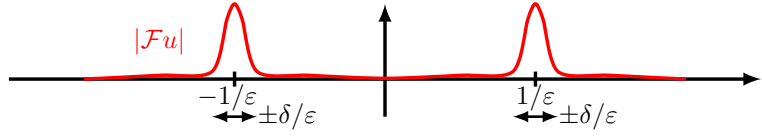

\subsection*{Higher order ansatz:}
In order to identify the higher order terms of order $\mathcal{O}(\varepsilon)$, 
we further split $v$ as follows: 
\begin{equation}
\label{e:ansatz}
 v = v_1 + \varepsilon v_0 + \varepsilon v_2 + \varepsilon R, 
\end{equation}
with $v_1=P_1v$, as before, concentrated in Fourier space on modes 
$k$ such that 
$\left| k \pm \frac{1}{\varepsilon}\right| < \frac{\delta}{\varepsilon}$.
For the two new terms we also use smooth Fourier projections $P_0$ and $P_2$
such that 
$v_0=\varepsilon^{-1}P_0 v$ is concentrated in Fourier space 
on modes with $| k | < \frac{2\delta}{\varepsilon}$, 
and $v_2=\varepsilon^{-1}P_2v$ is concentrated on $\left| k \pm \frac{2}{\varepsilon}
\right| < \frac{2 \delta}{\varepsilon}$.
Note that is contrast to $v_1$ we also rescale $v_0$ and $v_2$ by a factor 
$\varepsilon^{-1}$, so that they are of order $1$.
Finally, $R$ just collects all the remaining terms.

\begin{remark}
	It might seem strange at the first glance that we choose 
	different radii for the regions in Fourier space around $\pm1$ 
	and for the ones around $0$ and $\pm2$, 
	but the reason we are considering the projections $P_0$ and $P_2$ 
	is to take care of the second order (i.e., the quadratic) terms. 
	But, when we square a term, in Fourier space we also double the size of its support, 
	hence, double the radius.
	
	In other terms, we want $(P_2+P_0)v_1^2=v_1^2$, or equivalently 
	$(I-P_2-P_0)v^2_1 =0$, so we do not want to cut away some parts of $v_1^2$, 
	which would happen with smaller balls in Fourier space around $0$ and $2$.
\end{remark}

\begin{remark}
Note that the $R$ in the ansatz~\eqref{e:ansatz} is simply
$R = \varepsilon^{-1}(I-P_1-P_2-P_0)v$. 
Here we cannot show that this term is  smaller than $\mathcal{O} (\varepsilon)$, 
as it contains 
the term $(I-P_1-P_2-P_0)W_{L_\varepsilon}$, which is $\mathcal{O} (\varepsilon )$, 
from the stochastic convolution
and we cannot show that it is smaller.  
\end{remark}

We will now use also $W_k = P_kW$, for $k=0,1,2$, to shorten the notation a bit.

Let us first check the equation for $v_1$. 
Simply projecting \eqref{SH:rescmild} with $P_1$
we see that $v_1$ is the mild solution of 
\begin{equation} \label{e:v1equation1}
\partial_T v_1 
= L_{\varepsilon} v_1 + \nu\varepsilon^{- 1} P_1  v^2 - P_1
   v^3 + \partial_T W_1\;,
\end{equation}
which we would also obtain by projection \eqref{SH:resc} directly.
Note that we have a bounded linear operator 
$L_{\varepsilon} P_1=  P_1L_{\varepsilon}=\mathcal{O}(1)$.

Now, by the ansatz \eqref{e:ansatz} we obtain for the cubic
\[
 P_1  v^3 = P_1 (v_1)^3 + \mathcal{O}(\varepsilon)\;,
\]
and for the quadratic term
\[
P_1  v^2 = P_1 (v_1)^2 + 2 \varepsilon P_1 (v_1 (v_0 + v_2 + R))
+ \mathcal{O}(\varepsilon^2)\;.
\]
Using the properties of the projectors in Fourier space, we have $P_1 (v_1)^2 =0 $ and $P_1 (v_1 R)=0$  
so that 
\[ \varepsilon^{-1}P_1  v^2 =  2  P_1 (v_1 v_0)  + 2  P_1 (v_1 v_2)
+ \mathcal{O}(\varepsilon)\;.
\]

We can plug this into~\eqref{e:v1equation1} to finally derive
\begin{multline}
\label{e:SHreduced}
\partial_T v_1 
= L_{\varepsilon} v_1 + 
2 \nu P_1 (v_1 v_0)  + 2 \nu P_1 (v_1 v_2)\\
- P_1(v_1)^3 + \mathcal{O}(\varepsilon) + \partial_T W_1\;.
\end{multline}

We would like, however, to have an equation in $v_1$ only, so we need to understand the behaviour of the two mixed products $v_1v_0$ and $v_1v_2$. This is the topic for the next section.

\section{Averaging}
\label{sec:aver}

Let's go on with the  terms $v_0$ and $v_2$ 
appearing in the ansatz \eqref{e:ansatz} above. 
The aim of this section is to show that when we consider the two products $v_1v_k$ for $k=0,2$ in~\eqref{e:SHreduced}, their leading order terms are in $v_1$ only.
From the rescaled Swift-Hohenberg equation in \eqref{SH:resc} or \eqref{SH:rescmild}
we have by projection with $P_0$
\[ 
\partial_T v_0 
= L_{\varepsilon} v_0 
+ \varepsilon^{- 2}\nu P_0  v^2 - \varepsilon^{- 1} P_0 v^3 
+ \varepsilon^{- 1} \partial_T W_0 
\]
with a bounded linear operator 
$P_0L_{\varepsilon}=L_{\varepsilon}P_0 \approx O (\varepsilon^{- 2})$.
Recall also that $v_0=\varepsilon^{-1}P_0v$, which makes the 
coefficients different from the equation for $v_1$.

As before we expand the nonlinear terms using \eqref{e:ansatz}
together with the properties of Fourier projections 
to obtain 
\[ \partial_T v_0 
= L_{\varepsilon} v_0 
+ \varepsilon^{- 2} P_0 \nu v_1^2 +\mathcal{O}(\varepsilon^{-1})
+ \varepsilon^{- 1} \partial_T W_0\;
\]
Moreover,
\[ 
\partial_T v_2 = L_{\varepsilon} v_2 + \varepsilon^{- 2} P_2 \nu v_1^2 +
  \mathcal{O}(\varepsilon^{-1}) + \varepsilon^{- 1} \partial_T W_2,
   \]
analogous to the previous one for $v_0$. 

Note again that in the two equations above 
for $v_k$, $k\in\{0,2\}$, the linear 
operators are bounded, but also large, as
$P_kL_\varepsilon=L_\varepsilon P_k=\mathcal{O}(\epsilon^{-2})$.
Nevertheless, for fixed $\varepsilon>0$ we can consider strong solutions of these equations 
in order to apply It\^o formula.

\begin{remark}
Note that in the mild formulation of the two equations above
for both $v_0$ and $v_2$, we have for the stochastic convolution 
\[\varepsilon^{-1}\int_0^T e^{(T-S)P_kL_\varepsilon}dW_k(S)=\mathcal{O}(1),
\] 
so one could conjecture that the noise has an $\mathcal{O}(1)$ 
contribution to $v_0$ and $v_2$.
But it is an Ornstein-Uhlenbeck process on the fast-time scale, 
so we will see below that its contribution in lowest order 
is actually negligible due to averaging.     
\end{remark}

We proceed by an explicit averaging result via It\^o formula.
The two operators $P_kL_{\varepsilon}$, $k = 0, 2$, are bounded and invertible.  
Furthermore, we can use It\^o formula and note that we get no correction terms in it, 
since the noise terms are independent. 
We thus obtain 
\begin{align*}
  \mathd [v_1 L_{\varepsilon}^{- 1} v_k]  = & L_{\varepsilon}^{- 1} v_k \mathd
  v_1 + v_1 L^{- 1}_{\varepsilon} \mathd v_k\\
   = & L_{\varepsilon}^{- 1} v_k  (L_{\varepsilon} v_1 +  \mathcal{O}(1) )\mathd t
 + L_{\varepsilon}^{- 1} v_k \mathd W_1\\
    & + v_1 L_{\varepsilon}^{- 1} [L_{\varepsilon} v_k + \varepsilon^{- 2}
  P_k \nu v_1^2 + \mathcal{O}(\varepsilon^{- 1})] \mathd t
 \\ &+ \varepsilon^{-1} v_1 L_{\varepsilon}^{- 1} \mathd W_k .
\end{align*}
Since the operator 
$L_{\varepsilon}^{-1}P_k=\mathcal{O}(\varepsilon^2)$, we can
identify the leading order terms. 
Only $v_1 v_k \mathd t$  and
$\nu \varepsilon^{- 2} L_{\varepsilon}^{- 1} P_k v_1^2 \mathd t$ 
 are of order 1. All other terms are small in $\varepsilon$.

So we can rewrite the previous equations to obtain  
\begin{equation}\label{e:products_identities} 
\int_0^T v_1 v_k \mathd t 
+ \nu  \int_0^T v_1 \varepsilon^{- 2} L_{\varepsilon}^{- 1} P_k v_1^2 \mathd t  
= \mathcal{O}(\varepsilon).
\end{equation}
We have thus identified for both cases $k=0$ and $k=2$ 
the leading order terms in~\eqref{e:SHreduced}.

Let us briefly remark here that in Section~\ref{sec:limiting}, when we identify explicitly the terms 
in the limiting equation, we will see that $\varepsilon^{- 2} L_{\varepsilon}^{- 1} P_k$ 
 can be replaced by suitable  constants.

We now look at equation \eqref{e:SHreduced} for $v_1$: 
\begin{multline*} 
\partial_T v_1 
= L_{\varepsilon} v_1 + 2\nu (P_1 (v_1 v_0) + P_1 (v_1v_2)) \\
- P_1 (v_1)^3 + \mathcal{O}(\varepsilon)  + \partial_T W_1\;,
\end{multline*}
in integral from 
in  order to plug in the averaging results from~\eqref{e:products_identities} 
to replace the terms including $v_0$ and $v_2$.
We obtain 
\begin{multline}
\label{e:finint}
 v_1(T) = v_1(0)+ \mathcal{O}(\varepsilon)  
+  W_1(T)
  \\ + \int_0^T \Big[L_{\varepsilon} v_1 - 2\nu^2 P_1 v_1 \varepsilon^{- 2} L_{\varepsilon}^{- 1} (P_0+P_2) v_1^2 - P_1 (v_1)^3 \Big]dS
\;.
\end{multline}
Neglecting the error term gives the final result
\begin{multline}
\label{e:final}
\partial_T v_1 
= L_{\varepsilon} v_1 
- 2\nu^2 P_1 v_1 \varepsilon^{- 2} L_{\varepsilon}^{- 1} (P_0+P_2) v_1^2 \\
- P_1 (v_1)^3  
+ \partial_T W_1\;.
\end{multline}
Let us remark that this approximation still depends on $\varepsilon$, but we will see later in Section 
\ref{sec:limiting} that in the setting of modulation equations 
we can further approximate it by an  $\varepsilon$-independent Ginzburg-Landau equation.
But for our purpose this approximation is sufficient, as it shows that the noise only appears as an additive forcing 
in the equation for the dominating modes. We will summarise our results in the next section and draw some conclusions.

\section{Final Result} 
\label{sec:final}

As we have now the limiting equation~\eqref{e:final} for $v_1$,
we can prove the following theorem:

\begin{theorem}
\label{thm:mainapprox}
Consider a solution $v$ of the rescaled Swift-Hohenberg 
equation~\eqref{SH:resc} such that the bound~\eqref{eq:boundIC} of Theorem~\ref{thm:attractivity} holds up to some $T_0>0$,  
that is, $v_1=\mathcal{O}(1)$  and $(I-P_1)v=\mathcal{O}(\varepsilon)$.

If $w$ is a solution of~\eqref{e:final}, then 
\[P_1v- w = \mathcal{O}(\varepsilon) \]
in the sense given in~\eqref{e:defO}.
\end{theorem}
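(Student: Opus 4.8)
The plan is to compare $v_1 = P_1 v$ and $w$ directly in their mild formulations on the range of $P_1$, exploiting that both are driven by the \emph{same} stochastic convolution $P_1 W_{L_\varepsilon}$, which therefore cancels in the difference. Writing $\mathcal{N}(u) = -2\nu^2 P_1\big(u\,\varepsilon^{-2}L_\varepsilon^{-1}(P_0+P_2)u^2\big) - P_1 u^3$ for the nonlinearity in \eqref{e:final}, its mild form reads $w(T) = e^{TL_\varepsilon}w(0) + \int_0^T e^{(T-S)L_\varepsilon}\mathcal{N}(w)\,dS + P_1 W_{L_\varepsilon}(T)$, while \eqref{e:finint}, rewritten in mild form (legitimate because $L_\varepsilon P_1$ is bounded for fixed $\varepsilon$), shows that $v_1$ satisfies the same equation up to an additive $\mathcal{O}(\varepsilon)$ term collecting all the reduction and averaging errors. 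Taking $w(0) = v_1(0)$, the difference $R := v_1 - w$ then obeys
\[
R(T) = \int_0^T e^{(T-S)L_\varepsilon}\big[\mathcal{N}(v_1) - \mathcal{N}(w)\big](S)\,dS + \mathcal{O}(\varepsilon).
\]

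\textbf{Operator bounds and local Lipschitz estimate.} First I would collect the bounds needed to control the right-hand side uniformly in $\varepsilon$. Since $L_\varepsilon \le 0$ and all operators are diagonal in Fourier space, the semigroup restricted to the range of $P_1$ is an $L^2$-contraction, and by the kernel estimates behind Lemma~\ref{lem:SC} one obtains $\|e^{TL_\varepsilon}\|_{C^{0,\alpha}_\kappa \to C^{0,\alpha}_\kappa} = \mathcal{O}(1)$ uniformly on $[0,T_0]$; likewise $\varepsilon^{-2}L_\varepsilon^{-1}(P_0+P_2) = \mathcal{O}(1)$ as already noted after \eqref{e:products_identities}, and $P_0, P_1, P_2$ are bounded uniformly in $\varepsilon$. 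With these in hand, the multilinear structure of $\mathcal{N}$ yields a local Lipschitz estimate $\|\mathcal{N}(v_1) - \mathcal{N}(w)\| \le C\big(1 + \|v_1\|^2 + \|w\|^2\big)\|R\|$ in $C^{0,\alpha}_\kappa$, the cubic and the bounded averaged quadratic each contributing an $\mathcal{O}(1)$ factor on the events where $v_1$ and $w$ are of order one.

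\textbf{A-priori control and Gronwall.} The essential point is a Gronwall argument coupled with an a-priori bound on $w$: by hypothesis \eqref{eq:boundIC} gives $v_1 = \mathcal{O}(1)$ on $[0,T_0]$, yet a comparable bound for $w$ is not assumed and, as the remark after Theorem~\ref{thm:attractivity} stresses, cannot come from stability of the cubic. I would handle this with a stopping-time bootstrap: fix a large constant $M$, set $\tau_\varepsilon = \inf\{T : \|w(T)\|_{C^{0,\alpha}_\kappa} \ge M\} \wedge T_0$, and run the Lipschitz-plus-Gronwall estimate on $[0,\tau_\varepsilon]$, where the Lipschitz constant is $\mathcal{O}(1)$ by construction. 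This gives $\|R(T)\| \le \mathcal{O}(\varepsilon) + C\int_0^T\|R(S)\|\,dS$ and hence $\sup_{[0,\tau_\varepsilon]}\|R\| = \mathcal{O}(\varepsilon)\,e^{CT_0} = \mathcal{O}(\varepsilon)$; since $v_1 = \mathcal{O}(1)$, the bound $\|w\| \le \|v_1\| + \|R\|$ then stays below $M$ with high probability, forcing $\tau_\varepsilon = T_0$ on an event of probability almost $1$, and the estimate propagates to all of $[0,T_0]$.

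\textbf{Main obstacle.} I expect the main difficulty to be precisely this interplay between the a-priori control of $w$ and the Gronwall estimate, since in the unstable-cubic regime one cannot hope for global bounds and must track the high-probability events and the time $T_0$ carefully. A secondary technical difficulty is verifying the uniform-in-$\varepsilon$ boundedness of the semigroup and of $\varepsilon^{-2}L_\varepsilon^{-1}(P_0+P_2)$ in the weighted H\"older norm $C^{0,\alpha}_\kappa$, which requires repeating the (standard but lengthy) kernel estimates behind Lemma~\ref{lem:SC} rather than the simpler $L^2$ diagonalization. Finally, one must keep the bookkeeping of the $c>0$ logarithmic corrections in \eqref{e:defO} consistent across the finitely many $\mathcal{O}(\varepsilon)$ contributions.
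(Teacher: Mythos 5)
Your proposal is correct and follows essentially the same route as the paper's proof: form the difference $e = v_1 - w$ of the two equations, observe that the identical additive noise $W_1$ cancels so the difference equation is deterministic, estimate the averaged-quadratic and cubic terms by a Lipschitz property valid where both solutions are of order one, and close with Gronwall to obtain $e = \mathcal{O}(\varepsilon)$.

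The one place you genuinely deviate is the a priori bound on $w$: the paper simply asserts that $w = \mathcal{O}(1)$ ``can similarly be shown'' (i.e.\ by re-running attractivity-type estimates for \eqref{e:final}), whereas you extract it from the comparison itself via a stopping-time bootstrap --- bound $\|w\| \le \|v_1\| + \|R\|$ up to $\tau_\varepsilon$, use Gronwall to make $\|R\|$ small there, and conclude $\tau_\varepsilon = T_0$ on an event of probability almost $1$. This is a legitimate and arguably more self-contained way to supply that ingredient, at the cost of tracking the high-probability events through the continuity argument; the paper's version instead outsources the bound on $w$ to a separate (unwritten) estimate. A second, minor difference: you work with the mild (semigroup) form of the difference equation, which needs $P_1 e^{TL_\varepsilon} = \mathcal{O}(1)$ from \eqref{e:SGP1}, while the paper uses the time-integrated form, exploiting that $L_\varepsilon P_1 = \mathcal{O}(1)$ is a bounded operator; the two formulations are interchangeable here. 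Your remark that the uniform-in-$\varepsilon$ operator bounds in $C^{0,\alpha}_\kappa$ (rather than in $L^2$) are the hidden technical cost is accurate, and applies equally to the paper's own sketch.
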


\begin{proof}[Idea of proof]
In the previous section  we saw in estimate \eqref{e:finint} that $P_1v$ 
satisfies equation \eqref{e:final}  with an additional small residual.

To remove the residual from \eqref{e:final},
we rely on the continuous dependence of the solution on an additive forcing.
This is a fairly standard argument, but, once again, quite long and technical 
if all the details are provided. We do not give it here.

Let us only give the key steps in order to motivate the error bound.  
Consider \eqref{e:finint} for $v_1=P_1v$ and the time-integrated 
version of \eqref{e:final} for $w$. With this we build 
the following equation for
the 
difference 
\begin{multline*}
 e(T) := v_1(T)- w(T) =  \mathcal{O}(\varepsilon) +
 \int_0^T   L_{\varepsilon} e ds 
  \\ 
  - 2\nu^2  \int_0^T  \Big[  P_1 v_1 \varepsilon^{- 2} L_{\varepsilon}^{- 1} (P_0+P_2) v_1^2
 - w \varepsilon^{- 2} L_{\varepsilon}^{- 1} (P_0+P_2) w^2 \Big]ds
 \\ -   \int_0^T   \Big[ P_1 (v_1)^3  - P_1 w^3 
 \Big]dS\\
\;.
\end{multline*}
Now we have a deterministic equation, where all cubic terms 
can be estimated by a global Lipschitz property, which follows from the   
fact that $v_1=\mathcal{O}(1)$ by \eqref{eq:boundIC} and that we can similarly show $w=\mathcal{O}(1)$.
Thus a lengthy computation involving Gronwall's Lemma shows that  
$e(T)= \mathcal{O}(\varepsilon)$.
\end{proof}

\begin{remark}[Global estimates]
Let us remark, without proof, that when the nonlinearity in \eqref{e:final} is a  stable cubic 
then
we can check that the solution of \eqref{e:final} exists for all times $T_0>0$ and is order $\mathcal{O}(1)$.
The assumption of Theorem~\ref{thm:mainapprox} remains true for any $T_0>0$,
and we obtain that even for large times of order one
the Fourier modes around $k=\pm1$ dominate the solution of \eqref{SH:resc},
and their dynamics is given by~\eqref{e:final}.
\end{remark}

\begin{remark}[No additional impact of noise]
Our main result is now a negative one. 
We consider Swift-Hohenberg in a scaling where small additive noise has an effect
on the dynamics. If we take smaller noises, then
we would see no contribution at all in the limiting equation. 

But even in our scaling, although there is strong nonlinear interaction of Fourier modes,
the impact of the noise is actually quite limited, due to the effect of averaging. 
The noise only 
appears as an additive forcing on the dominant modes, 
which is exactly the noise put into the original equation.
There is no further effect.
\end{remark}

Let us stress that even additive noise still might have a significant impact on the dynamics.
See for example \cite{BloHaiPav2015} where a degenerate noise was able to change the stability of a trivial solution in a Swift-Hohenberg equation. Nevertheless, we do not expect this here with full additive space-time white noise.

\begin{remark}[Dominant Pattern]
Without analysing the dynamics of \eqref{e:final} in detail, we can already draw the implication that all the essential dynamics
of our Swift-Hohenberg equation is given by $v_1$, which is 
concentrated in Fourier space around $\pm1$, or $\pm1/\epsilon$ for the rescaled equation. Thus we expect have solutions given by modulated pattern of an underlying $2\pi$-periodic pattern. 
\end{remark}

\section{Identifying the limit}
\label{sec:limiting}

The main result, Theorem \ref{thm:mainapprox}, already shows that the noise  
in the abstract modulation equation \eqref{e:final} appears only as an additive forcing.
Here we want to present some results on how to identify the terms in the equation  
\eqref{e:final}  in the limit $\varepsilon\to0$.

We will use the ansatz, suggested by the modulation equation approach,
\[
v_1(T,X) = A(T,X)e^{iX/\varepsilon} + \text{c.c.} 
\]
with some smoothness of $A$. Let us remark, that a more detailed analysis 
as used in Theorem \ref{thm:attractivity}
for the attractivity result should justify that after some time this result is
typically true for bounded solutions of \eqref{SH:resc}. 

Note that the smoothness of $A$ is an assumption here.
In space we cannot assume more than weighted H\"older spaces 
with exponent strictly less than $1/2$. 
See for example \cite{BiaBloSch2019CMP} or one on the 
many other results on the (complex or real) Ginzburg-Landau 
(also called Allen-Cahn or $\Phi^4_3$-model) in 1D, some of which we have mentioned in Section~\ref{sec:sol}.

The crucial term that needs enough smoothness is the linear operator.  
If we have that $A\in C^4_\kappa$ is order one, then    
we can evaluate directly as done by Kirrmann, Mielke, and Schneider in~\cite{KirSchMie1992PRSESM} 
\[ 
L_{\varepsilon} v_1(T,X) = 4\partial_X^2 A(T,X)e^{iX/\varepsilon} + \text{c.c.} + \mathcal{O}(\varepsilon).
\]
In the theory of deterministic modulation equation there are numerous 
results, which need less regularity than \cite{KirSchMie1992PRSESM}.
See for example Part IV of \cite{SchUec2017} also for many other examples in this direction. 
But still they need derivatives and moreover $A$ to be uniformly bounded in space.

This is in the stochastic case, however, too much regularity to ask for, 
so we need to take a different approach.
In the setting of weighted H\"older-regularities, using the mild 
formulation of equation \eqref{e:final} we can replace 
the semigroups of the Swift-Hohenberg operator $L_\varepsilon$ acting on $v_1$ 
by the semigroup generated by $4\partial_X^2$ acting on $A$,
which is the mild version of the statement we are looking for.
This is rigorously proven in the exchange lemmas in \cite{BiaBloSch2019CMP}.

For the noise, we also have to treat the mild formulation 
of the modulation equation \eqref{e:final}.
In there we have the stochastic convolution 
\[
(W_1)_{L_\varepsilon}(T)
= P_1W_{L_\varepsilon}(T) = P_1\int_0^T e^{(T-S)L_\varepsilon}dW(S).
\]
It was proven in \cite{BiaBlo2016SPA} that we have 
\[P_1 W_{L_\varepsilon}(T,X) 
\approx \mathcal{W}_{4\partial_X^2}(T,X) e^{iX/\varepsilon}
\]
for a complex-valued standard cylindrical Wiener 
process $\mathcal{W}$ that consists of a rescaling 
of the Fourier modes of $W$ acting on the dominant modes around $k=1$,
or $k=1/\varepsilon$ in the rescaled version. 
Moreover, one can write $\mathcal{W}$ explicitly in terms of $W$.
Finally, $\eta=\partial_T\mathcal{W}$ is complex valued space-time white noise.

Let us now turn to the nonlinear terms.
For the simple cubic term we obtain, by expanding the cube,
\[
 - P_1 (v_1)^3 \approx  -3 A|A|^2  e^{iX/\varepsilon} + \text{c.c.} 
\]
The previous is actually not an identity, but only an approximation, as 
\[
(1-P_1)A|A|^2  e^{iX/\varepsilon} \neq 0.
\] 
This is, on the other hand, a contribution to the non-dominant modes, 
which are small by Theorem \ref{thm:mainapprox}.

For the other cubic terms, let us start by considering the one with the projection $P_0$.
In the following we are neglecting error terms given by 
contributions to the non-dominant Fourier modes.
For example  $(I-P_0) |A|^2$ is non-zero, 
but small nonetheless, due to the regularity of $A$. 
We obtain
\begin{align*}
\varepsilon^{- 2} L_{\varepsilon}^{- 1} P_0 v_1^2(T,X)
 &= 2\varepsilon^{- 2} L_{\varepsilon}^{- 1} P_0 |A|^2(T,X) \\
  &= -2 (1+(\varepsilon^2\partial_X^2))^{- 2} P_0 |A|^2(T,X)\\
 &= -2|A|^2(T,X).
\end{align*}
For the step where we replaced  $ L_{\varepsilon}^{- 1}$  
using the eigenvalues of the operator, we can easily see that 
\[(1+(\varepsilon^2\partial_X^2)^{- 1} P_0 = 1+\mathcal{O}(\delta).
\]
Recall that  $(1+(\varepsilon^2\partial_X^2))^{-2} 1 = 1$. 
But, using a little bit of regularity of $A$, we can improve this result to an error term that is small in $\varepsilon$. 
Thus finally,
\[
- \nu^2 P_1 v_1 \varepsilon^{- 2} L_{\varepsilon}^{- 1} P_0 v_1^2 =   2 \nu^2 A|A|^2(T,X) e^{iX/\varepsilon} +\text{c.c.} 
\]

Similarly, we have for the cubic term involving $P_2$,
\begin{equation*}
\begin{split}
\lefteqn{\varepsilon^{- 2} L_{\varepsilon}^{- 1} P_2 v_1^2(T,X)}\\ 
& =  - (1+(\varepsilon^2\partial_X^2))^{-2} P_2 A^2(T,X)e^{i2X/\varepsilon} +\text{c.c.} \\
& = - \frac19  A^2(T,X)e^{i2X/\varepsilon} +\text{c.c.} 
\end{split}
\end{equation*}
The main difference with respect to the previous term is due to the different constant. This can be seen by the fact that 
$(1+(\varepsilon^2\partial_X^2))^{-2}e^{i2X/\varepsilon} = \frac19$.
We finally obtain
\begin{multline*}
- 2\nu^2 P_1 v_1 \varepsilon^{- 2} L_{\varepsilon}^{- 1} (P_0+P_2) v_1^2 \\ 
 =2(2+1/9) \nu^2 A|A|^2(T,X) e^{iX/\varepsilon} +\text{c.c.} \;.
\end{multline*}

Collecting all cubic terms together with the result on the semigroups and the stochastic convolution,
we finally obtain the mild formulation of the  
Ginzburg-Landau equation
\[
\partial_T A = 4 \partial_X^2 A - \Big(3- \frac{38}{9} \nu^2\Big) A|A|^2 + \eta.
\]

So finally this Ginzburg-Landau equation gives the dynamics of the amplitude of the modulated pattern that dominates the behaviour of the Swift-Hohenberg equation.
Thus the properties of the change of dynamics when varying $\nu$ 
should reflect the dynamics of this Ginzburg-Landau equation.

If there is no quadratic nonlinearity in the Swift-Hohenberg 
equation (i.e. $\nu=0$), then we have a stable cubic 
$-3A|A|^2$ in Ginzburg-Landau, and due to the presence of noise $A$  should be centred around zero on the order of the noise strength.

Turning on the quadratic nonlinearity weakens this stability and lets $A$ be bigger, until a critical threshold, where 
the stability breaks down.

It would be  interesting to characterize this bifurcation 
in terms of random attractors of invariant measures. However there seems to be no invariant measure available yet
in the setting we are working in.

For random attractors the situation, even on bounded domains, is not that clear, and it might be that for additive noise the random attractor is anyway always a single stable stationary solution. 
See for example \cite{FlaGesSch2017PTRF} for a quite general result in the case of SDEs, and \cite{BiaBloYan2016N} for a highly degenerate noise 
in a Swift-Hohenberg equation on an unbounded domain.

In terms of invariant measures, we suppose that, 
similarly to \cite{BloHai2004CMP}, one should be able to extend 
our approximation result to invariant measures, 
provided the Ginzburg-Landau equation is ergodic 
in the setting we are interested in. 
Then the qualitative changes in the dynamics, as described heuristically above, could be seen in qualitative changes of the invariant measure. 

\section{Quintic case}
\label{sec:quintic}

Here we comment briefly on the modifications necessary 
in the quintic case, stated in~\eqref{SH5} and rewritten here for ease of reference:
\begin{equation*}
\partial_t u = 
- (1 + \Delta)^2 u 
+ \nu_2 \varepsilon^{1/2} u^2 
+ \nu_3 \varepsilon u^3 
- u^5 
+ \varepsilon \partial_t \widetilde{W.}
\end{equation*}

Let us begin by saying that we do not discuss the existence of solutions.
Similarly to the cubic case~\eqref{SH}, this can be done using standard methods,
and we assume here that an analogue to Assumption~\ref{ass:existence} holds also for \eqref{SH5}.
 
The scaling 
\[
u(t,x)=\varepsilon^{1/2} v(\varepsilon^2 t,\varepsilon x)
\]
in \eqref{SH5} yields 
\begin{equation*}
\partial_T v = L_\varepsilon v 
+ \frac{1}{\varepsilon}\nu_2 v^2 
+ \nu_3 v^3 
- v^5 
+ \partial_T W.
\end{equation*}

\subsection*{Attractivity:} The attractivity result is now very similar, 
as apart from the quintic, we have exactly the same terms in the equation.
We only have to note that
\[
(v_1+\mathcal{O}(\varepsilon))^5 = (v_1)^5+\mathcal{O}(\varepsilon)\;.
\]
Thus the quintic (as the cubic) does not change any of the estimates and we can assume 
that $v_1$ is also dominant. In other words, 
\[ 
v=v_1+\mathcal{O}(\varepsilon).
\]  

\subsection*{Equation for \texorpdfstring{$v_1$}{v1}:}
Similar to what we had in~\eqref{e:v1equation1} for the cubic, $v_1$ solves 
\[
\partial_T v_1 
= L_{\varepsilon} v_1 + \nu_2\varepsilon^{- 1} P_1  v^2 +\nu_3 P_1
   v^3 - P_1v^5 + \partial_T W_1\;.
\]
and thus expanding the powers and using as before that $P_1v_1^2=0$ and $P_1(I-P_2-P_0) =0$ yields
\begin{equation}
\label{e:v_1SH5}
 \partial_T v_1 
= L_{\varepsilon} v_1 + 2 \nu_2P_1 v_1(v_2+v_0)  +\nu_3 P_1v_1^3 - P_1v_1^5 + \mathcal{O}(\varepsilon) + \partial_T W_1\;.
\end{equation}

\subsection*{Averaging:}
In a similar way as the equation for $v_1$ we derive (using $\varepsilon v_k = P_k v$) 
from \eqref{SH5} that (with $k=0$ and $k=2$)
\[ 
\partial_T v_k 
= L_{\varepsilon} v_k 
+ \varepsilon^{- 2}\nu_2 P_k  v^2 + \varepsilon^{- 1}\nu_2 P_k v^3-  \varepsilon^{- 1}P_k v^5
+ \varepsilon^{- 1} \partial_T W_k\;.
\]
As we did earlier in the cubic case, we expand the nonlinear terms
to obtain 
\[ \partial_T v_k 
= L_{\varepsilon} v_k 
+ \varepsilon^{- 2} \nu_2P_k  v_1^2 +\mathcal{O}(\varepsilon^{-1})
+ \varepsilon^{- 1} \partial_T W_k\;.
\]
Now the averaging of the quadratic terms in the quintic case \eqref{e:v_1SH5}
 is exactly the same as for the cubic case \eqref{SH} and we obtain
 \begin{multline}
\label{e:final5}
\partial_T v_1 
= L_{\varepsilon} v_1 
- 2\nu_2^2 P_1 v_1 \varepsilon^{- 2} L_{\varepsilon}^{- 1} (P_0+P_2) v_1^2 \\
+\nu_3 P_1 (v_1)^3 - P_1v_1^5
+ \mathcal{O}(\varepsilon)  
+ \partial_T W_1\;.
\end{multline}
\subsection*{Identifying the limit:} 
Using the ansatz
\[
v_1(T,X) = A(T,X)e^{iX/\varepsilon} + \text{c.c.} 
\]
we see that we can treat almost all terms in \eqref{e:final5} 
in exactly the same way as in~\eqref{sec:limiting}.
Only the term $P_1v_1^5$ was not present there.
Here we obtain similar to the cubic case
\[
 P_1 (v_1)^5 \approx  10 A|A|^4  e^{iX/\varepsilon} + \text{c.c.} 
\]
and the final result is thus
\[
\partial_T A = 4 \partial_X^2 A + \Big(3 \nu_3 + \frac{38}{9} \nu_2^2\Big) A|A|^2 - 10 A|A|^4 + \eta.
\]

Due to the presence of the quintic nonlinearity,
even with noise, we expect the solutions to be confined with high probability in a neighbourhood of $0$,
but the behaviour is quite different depending on whether the coefficient in front of the cubic is positive or negative.

For a negative coefficient we expect solutions to be on the order of noise-strength close to $0$.
But, if the coefficient is positive, 
then  in the deterministic case there are many stationary solutions $A$, for example the constants defined by  
$10 |A|^2  \approx (3 \nu_3 + \frac{38}{9} \nu_2^2)$,
and in the stochastic case we expect solutions 
to be with high probability of order noise-strength 
centred around these stationary solutions.
 
\section{Summary and Conclusions}

We study as an example the stochastic Swift-Hohenberg equation with additive Gaussian white noise 
in an unbounded domain, where the deterministic equation exhibits a supercritical pitchfork bifurcation.
 
As our main result we studied the reduction of dynamics via 
modulation equation to a stochastic Ginzburg-Landau equation. 
Here we identify the order where small noise first impacts the dynamics close to a change of stability.

Due to the quadratic and cubic nonlinearities in the Swift-Hohenberg equation there are many nonlinear interactions 
of Fourier modes, that lead to a cubic nonlinearity in the limiting Ginzburg-Landau equation.
Surprisingly, the stochastic effects mostly cancel out due to averaging effects and only an additive forcing survives in the limiting equation. 

This result is the first step towards a better 
understanding of the behaviour of the stochastic 
bifurcation for SPDEs with additive translation invariant 
noise on an unbounded domain. Nevertheless, in the setting we are working in here, a full description in terms of random attractors or invariant measures is not yet available, even for the Ginzburg-Landau equation.

\section*{Acknowledgments}  
The authors would like to thank Edgar Knobloch for pointing out this question, as well as the reviewers and the editor for the detailed and helpful feedback on the first version of this paper.
Both authors also acknowledge the support of MOPS at the University of Augsburg.
LAB would like to thank the Hausdorff Institute for Mathematics in Bonn, where part of the research was conducted during the Junior Trimester Program ``Randomness, PDEs and Nonlinear Fluctuations''. 
  
\newcommand{\noopsort}[1]{}


\begin{thebibliography}{10}
	
	\bibitem{BiaBlo2016SPA}
	L.~A.~Bianchi and D.~Bl{\"o}mker.
	\newblock Modulation equation for {{SPDEs}} in unbounded domains with
	space\textendash time white noise \textemdash{} {{Linear}} theory.
	\newblock {\em Stochastic Processes and their Applications},
	126(10):3171--3201, 2016.
	
	\bibitem{BiaBloSch2019CMP}
	L.~A.~Bianchi, D.~Bl{\"o}mker, and G.~Schneider.
	\newblock Modulation {{Equation}} and {{SPDEs}} on {{Unbounded Domains}}.
	\newblock {\em Communications in Mathematical Physics}, 371(1):19--54, 2019.
	
	\bibitem{BiaBloYan2016N}
	L.~A.~Bianchi, D.~Bl{\"o}mker, and M.~Yang.
	\newblock Additive noise destroys the random attractor close to bifurcation.
	\newblock {\em Nonlinearity}, 29(12):3934--3960, 2016.
	
	\bibitem{BloHai2004CMP}
	D.~Bl{\"o}mker and M.~Hairer.
	\newblock Multiscale {{Expansion}} of {{Invariant Measures}} for {{SPDEs}}.
	\newblock {\em Communications in Mathematical Physics}, 251(3):515--555, 2004.
	
	\bibitem{BloHaiPav2015}
	D.~Bl{\"o}mker, M.~Hairer, and G.~A.~Pavliotis.
	\newblock Some remarks on stabilization by additive noise.
	\newblock In G.~Da~Prato and L.~Tubaro, editors, {\em Stochastic {{Partial
				Differential Equations}} and {{Applications}}}, volume~25 of {\em Quaderni Di
		{{Matematica}}}, pages 37--50. {Aracne}, 2015.
	
	\bibitem{BloHan2010SD}
	D.~Bl{\"o}mker and Y.~Han.
	\newblock Asymptotic compactness of stochastic complex
	{{Ginzburg}}\textendash{{Landau}} equation on an unbounded domain.
	\newblock {\em Stochastics and Dynamics}, 10(04):613--636, 2010.
	
	\bibitem{BrzPes1999SM}
	Z.~Brze{\'z}niak and S.~Peszat.
	\newblock Space-time continuous solutions to {{SPDE}}'s driven by a homogeneous
	{{Wiener}} process.
	\newblock {\em Studia Mathematica}, 137(3):261--299, 1999.
	
	\bibitem{BrzPes2000SPPGNI}
	Z.~Brze{\'z}niak and S.~Peszat.
	\newblock Maximal {{Inequalities}} and {{Exponential Estimates}} for
	{{Stochastic Convolutions}} in {{Banach Spaces}}.
	\newblock In {\em Stochastic {{Processes}}, {{Physics}} and {{Geometry}}: {{New
				Interplays}}}, volume~28 of {\em Conference {{Proceedings}}}, pages 55--64,
	{Max Planck Institute for Mathematics in the Sciences, Leipzig}, 2000.
	{Canadian Mathematical Society}.
	
	\bibitem{BurDaw2012SJADS}
	J.~Burke and J.~H.~P.~Dawes.
	\newblock Localized states in an extended {{Swift}}\textendash{{Hohenberg}}
	equation.
	\newblock {\em SIAM Journal on Applied Dynamical Systems}, 11(1):261--284,
	2012.
	
	\bibitem{BurKno2006PRE}
	J.~Burke and E.~Knobloch.
	\newblock Localized states in the generalized {{Swift}}-{{Hohenberg}} equation.
	\newblock {\em Physical Review E}, 73(5):056211, 2006.
	
	\bibitem{CroHoh1993RMP}
	M.~C.~Cross and P.~C.~Hohenberg.
	\newblock Pattern formation outside of equilibrium.
	\newblock {\em Reviews of Modern Physics}, 65(3):851--1112, 1993.
	
	\bibitem{DaLun1998AANLCSFMNRLMA}
	G.~Da~Prato and A.~Lunardi.
	\newblock Maximal regularity for stochastic convolutions in
	{{{\emph{L}}}}{\emph{{\textsuperscript{p}}}} spaces.
	\newblock {\em Atti della Accademia Nazionale dei Lincei. Classe di Scienze
		Fisiche, Matematiche e Naturali. Rendiconti Lincei. Matematica e
		Applicazioni}, 9(1):25--29, 1998.
	
	\bibitem{FlaGesSch2017PTRF}
	F.~Flandoli, B.~Gess, and M.~Scheutzow.
	\newblock Synchronization by noise.
	\newblock {\em Probability Theory and Related Fields}, 168(3):511--556, 2017.
	
	\bibitem{HilMetBorDew1995PRE}
	M.~F.~Hilali, S.~M{\'e}tens, P.~Borckmans, and G.~Dewel.
	\newblock Pattern selection in the generalized {{Swift}}-{{Hohenberg}} model.
	\newblock {\em Physical Review E}, 51(3):2046--2052, 1995.
	
	\bibitem{KirOM2014JoMP}
	E.~Kirkinis and R.~E.~O'Malley.
	\newblock Amplitude modulation for the {{Swift}}-{{Hohenberg}} and
	{{Kuramoto}}-{{Sivashinski}} equations.
	\newblock {\em Journal of Mathematical Physics}, 55(12):123510, 2014.
	
	\bibitem{KirSchMie1992PRSESM}
	P.~Kirrmann, G.~Schneider, and A.~Mielke.
	\newblock The validity of modulation equations for extended systems with cubic
	nonlinearities.
	\newblock {\em Proceedings of the Royal Society of Edinburgh Section A:
		Mathematics}, 122(1-2):85--91, 1992/ed.
	
	\bibitem{KleBloMoh2014ZAMP}
	K.~Klepel, D.~Bl{\"o}mker, and W.~W.~Mohammed.
	\newblock Amplitude equation for the generalized
	{{Swift}}\textendash{{Hohenberg}} equation with noise.
	\newblock {\em Zeitschrift f\"ur angewandte Mathematik und Physik},
	65(6):1107--1126, 2014.
	
	\bibitem{MohBloKle2013SJMA}
	W.~W.~Mohammed, D.~Bl{\"o}mker, and K.~Klepel.
	\newblock Modulation {{Equation}} for {{Stochastic Swift}}--{{Hohenberg
			Equation}}.
	\newblock {\em SIAM Journal on Mathematical Analysis}, 45(1):14--30, 2013.
	
	\bibitem{MoiWeb2020CPAM}
	A.~Moinat and H.~Weber.
	\newblock Space-{{Time Localisation}} for the {{$\Phi^4_3$}} {{Dynamic Model}}.
	\newblock {\em Communications on Pure and Applied Mathematics}, n/a(n/a), 2020.
	
	\bibitem{MouWeb2017AP}
	J.-C.~Mourrat and H.~Weber.
	\newblock Global well-posedness of the dynamic {{$\Phi$}}{\textsuperscript{4}}
	model in the plane.
	\newblock {\em The Annals of Probability}, 45(4):2398--2476, 2017.
	
	\bibitem{vanVerWei2012AP}
	J.~{\noopsort{neerven}}{van Neerven}, M.~Veraar, and L.~Weis.
	\newblock Stochastic maximal
	{{{\emph{L}}}}{\emph{{\textsuperscript{p}}}}-regularity.
	\newblock {\em The Annals of Probability}, 40(2):788--812, 2012.
	
	\bibitem{PesZab1997SPA}
	S.~Peszat and J.~Zabczyk.
	\newblock Stochastic evolution equations with a spatially homogeneous
	{{Wiener}} process.
	\newblock {\em Stochastic Processes and their Applications}, 72(2):187--204,
	1997.
	
	\bibitem{RocZhuZhu2017JFA}
	M.~R{\"o}ckner, R.~Zhu, and X.~Zhu.
	\newblock Restricted {{Markov}} uniqueness for the stochastic quantization of
	{{{\emph{P}}}}({{$\Phi$}}){\textsubscript{2}} and its applications.
	\newblock {\em Journal of Functional Analysis}, 272(10):4263--4303, 2017.
	
	\bibitem{SchUec2017}
	G.~Schneider and H.~Uecker.
	\newblock {\em Nonlinear {{PDEs}}. {{A Dynamical Systems Approach}}}.
	\newblock {American Mathematical Soc.}, 2017.
	
	\bibitem{SwiHoh1977PRA}
	J.~Swift and P.~C.~Hohenberg.
	\newblock Hydrodynamic fluctuations at the convective instability.
	\newblock {\em Physical Review A}, 15(1):319--328, 1977.
	
\end{thebibliography}
 \end{document}